\newtheorem{thm}{Theorem}
\newtheorem{lemma}[thm]{Lemma}
\newcommand{\ppp}[2][]{\xi_{#2}\ifthenelse{\equal{#1}{}}{}{^{#1}}}
\newcommand{\Poi}{\mathrm{Poi}}
\renewcommand{\P}{\mathbf{P}}
\newcommand{\prob}{\mathbf{P}}
\newcommand{\E}{\mathbf{E}}
\newcommand{\1}{\mathbf{1}}
\newcommand{\indc}{\1}
\newcommand{\fp}{\mathcal{F}}
\newcommand{\desc}{{desc}}
\newcommand{\word}{\mathbf{W}}
\newcommand{\nbhd}{\mathbf{N}}
\newcommand{\children}{\mathcal{C}}
\newcommand{\limtree}{\mathbf{T}}
\newcommand{\limdesc}{\mathbf{D}}
\newcommand{\bw}{{ w}}
\newcommand{\wtree}{\gamma}
\newcommand{\inert}{\square}
\newcommand{\leaves}{\mathcal{L}}
\newcommand{\cwords}{\beta}
\newcommand{\intersection}{\mathcal{B}}
\newcommand{\gw}{\mathbf{GW}}
\author{Samuel Regan\affiliationmark{1} and Erik Slivken\affiliationmark{2}\thanks{Partially supported by ERC Starting Grant 680275 MALIG}}
\title{Expected size of a tree in the fixed point forest}
\affiliation{
University of California Davis\\
Dartmouth College
}
\keywords{sorting algorithms, random trees, Poisson point processes, random permutations}
\begin{document}
\publicationdetails{21}{2019}{2}{1}{5628}
\maketitle

\begin{abstract}We study the local limit of the fixed-point forest, a tree structure associated to a simple sorting algorithm on permutations.  This local limit can be viewed as an infinite random tree that can be constructed from a Poisson point process configuration on $[0,1]^\mathbb{N}$.  We generalize this random tree, and compute the expected size and expected number of leaves of a random rooted subtree in the generalized version.  We also obtain bounds on the variance of the size. 
\end{abstract}

\section{Introduction}

We start with a simple sorting algorithm on a deck of cards labeled $1$ though $n$.  If the value of the top card is $i$, place it in the $i$th position from the top in the deck.  Repeat until the top card is a $1$.  Viewing the deck of cards as a permutation in one-line notation $\pi=\pi(1)\pi(2)\cdots \pi(n)$, we create a new permutation, $\tau(\pi)$, by removing the \emph{value} $\pi(1)$ from beginning of the permutation and putting it into \emph{position} $\pi(1)$.  For example, if $\pi = 4 3 5 1 2$ then $\tau(\pi)=35142$.  This induces a graph whose vertices are the permutations of $[n]=\{1,\cdots,n\}$ and edges are pairs of permutations $(\pi,\tau(\pi)).$  Note that $\tau(\pi)$ has a fixed point at the position $\pi(1).$

This graph is a rooted forest, which we denote by $F_n$ and call the \emph{fixed point forest}.  A rooted forest is a union of rooted trees, and a tree is a graph that does not contain any closed loops involving distinct vertices.  A permutation that begins with 1 is called the base of the tree in which they are contained.  A thorough introduction to the fixed point forest can be found in \cite{JSS}.

The fixed point forest was first studied in \cite{McKinley}.  The largest tree in $F_n$ has size bounded between $(n-1)!$ and $e(n-1)!$ and has as its base the identity permutation.  The longest path from a leaf to a base is $2^{n-1}-1$ and is unique, starting from the permutation $2 3 \cdots n 1$ and ending at the identity.  
       
Let $\mathfrak{S}_n$ denote the set of permutations of length $n$.  For $\pi\in \mathfrak{S}_n$, let $\fp(\pi)$ denote the collection of fixed points of $\pi$ other than $1$.  For each $m\in \fp(\pi)$ we create a new permutation $\pi^{(m)}$ such that 
\begin{equation*}
	\pi^{(m)} (i ) = \left \{\begin{array}{lr} m, & i=1 \\
\pi(i-1), & 2\leq i \leq m\\
\pi(i),  & m< i \leq n \end{array} \right. .
\end{equation*}

We say we \emph{bump} the value $m$ in $\pi$ to create $\pi^{(m)}$ and call $\pi^{(m)}$ a \emph{child} of $\pi$.  We let $\children(\pi) = \{ \pi^{(m)} :  m\in \fp(\pi)\}$ denote the set of children of $\pi$.  Every child $\sigma \in \children(\pi)$ satisfies $\tau(\sigma) = \pi$ hence is connected to $\pi$ in $F_n$.  

Let $N(\pi)$ be the rooted tree in $F_n$ that contains $\pi$, with $\pi$ designated as the root instead of the unique permutation that starts with $1$ in $N(\pi)$.  Let $\desc(\pi)$ be the subtree of $N(\pi)$ rooted at $\pi$ and consisting of $\pi$ and its descendants, so that $\desc(\pi) \subseteq N(\pi).$  We call this the \emph{descendant tree} of $\pi$ (See Figure \ref{desctree}).  Note that for any permutation $\sigma \in \desc(\pi)$, there is some $r$ such that $\tau^r(\sigma) = \pi$.  


By Theorem 3.5 in \cite{JSS}, there exists a tree,  $\limtree$, such that as $n\to \infty$, for $\mathbf{\pi}_n$ chosen uniformly at random from permutations of size $n$, the randomly rooted tree $\nbhd_n = N(\mathbf{\pi}_n)$, converges in the local weak sense to $\limtree$.  This limiting tree is described in Section 2 of \cite{JSS}, and the subtree of $\limtree$ which corresponds to the local weak limit of $\desc(\mathbf{\pi}_n)$ has a similar description, denoted by $\limdesc$.  In \cite{JSS}, they find the distribution for the shortest and longest paths from the root to a leaf in $\limdesc$.  
The main purpose of the paper is to study the size of $\limdesc$.  For $\alpha\in [0,1]$, we define a generalization of $\limdesc$, denoted $\limdesc_\alpha$ such that $\limdesc = \limdesc_1$.  We compute the expected size and expected number of leaves of $\limdesc_\alpha$ and show that they are both unbounded for $\alpha =1$.  Finally we find bounds on the second moment of the size of $\limdesc_\alpha$.  We show that the second moment has a phase transition from finite to infinite somewhere between $(3-\sqrt{5})/2$ and $(\sqrt{5} -1)/2.$  

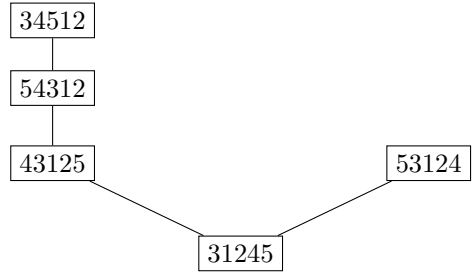
\begin{figure}\centering
\begin{tikzpicture}[
every fit/.style={ellipse,draw,inner sep=2pt},
grow'=up,
leaf/.style={draw, fill, circle, minimum size=4pt, inner sep =1pt},
level 1/.append style={sibling distance=50mm, level distance =12mm},
level 2/.append style={sibling distance=20mm, level distance =10mm},
level 3/.append style={sibling distance=10mm, level distance = 9mm},
level 4/.append style={sibling distance=5mm, level distance = 8mm},
]
\node[draw] (root){$31245$} 
	child{ node [draw] {$43125$ } 
  		child{node [draw] {$ 54312$}
  			child{node [draw]{$34512$}	
  			}
  		}
  	}
  	child{node [draw] {$53124$}
  	}
;	
\end{tikzpicture}
\caption{The descendant tree $\desc(\pi)$ for $\pi = 31245$}
	\label{desctree}
\end{figure}

\section{Local limits, point process configurations, and trees}\label{defs}

\subsection*{Poisson Point Processes}

The following briefly introduces an important probabilistic object:  Poisson point processes.  A thorough treatment can be found in \cite{Kingman}.  

We say a random variable $X$ is $\Poi(\alpha)$ if it satisfies $\prob(X = k) = \frac{1}{k!}e^{-\alpha}\alpha^k.$  If $X_0$ and $X_1$ are two independent $\Poi(\alpha_0)$ and $\Poi(\alpha_1)$, respectively, then their sum is $\Poi(\alpha_0+\alpha_1)$.  

A point process on $[0,1]$ is an integer-valued measure on Borel sets of $[0,1]$.  It may be viewed as a collection of points, which represent the atoms of the measure.  A point process configuration on $[0,1]$ is a collection of point processes, each on $[0,1]$, and can be viewed as a collection of labelled points on $[0,1].$    
 
A Poisson point process on $[0,1]$ with intensity $\alpha$ is a random integer-valued measure which satisfies two properties:  For any Borel subset $ E \subset[0,1]$ with Borel measure $\lambda$, the number of atoms of the point process in $E$ is given by $\Poi(\alpha\lambda)$, and for any disjoint Borel subsets of $[0,1]$ the number of atoms in each are independent.  Conditioned on the number of atoms in $E$ the location of each of the atoms is independent and uniform in $E$.         

Collections of Poisson point processes can be merged to create a single poisson point process.  Suppose $\xi_0$ is a $\Poi(\alpha_0)$ point process on $[0,1]$ and $\xi_1$ is $\Poi(\alpha_1)$ point process on $[0,1]$ with $\xi_0$ and $\xi_1$ both independent.  Then the union of $\xi_0$ and $\xi_1$ is distributed like a $\Poi(\alpha_0+\alpha_1)$ point process.  The reverse is also true.  Let $\xi'$ be a $\Poi(\alpha_0+\alpha_1)$ point process on $[0,1]$ and label each atom $0$ with probability $\alpha_0/(\alpha_0+\alpha_1)$ and $1$ otherwise.  Let $\xi_0$ denote the point process consisting of the atoms labeled $0$ and $\xi_1$ the point process of the remaining atoms.  Then $\xi_0$ and $\xi_1$ are, respectively, independent Poisson($\alpha_0$) and Poisson($\alpha_1$) point processes on $[0,1]$.  This can be generalized further to $\alpha = \alpha_0 + \cdots + \alpha_{k-1}$.  If $\xi'$ is a Poisson($\alpha$) point process each atom in $\xi'$ is independently labeled such that the label is $i$ with probability $\alpha_i/\alpha$ for $0\leq i < k$, then the collection of atoms labeled $i$ is a Poisson($\alpha_i$) point process and each $\xi_i$ is independent of the rest.  

Let $\xi_1$ and $\xi_2$ be two independent Poisson($\alpha)$ point processes.  For $x\in (0,1)$, define $\xi'_1 = \xi_2\big|_{[0,x)} + \xi_1\big|_{(x,1]}$ to be the point process consisting of the atoms from $\xi_2$ restricted to the interval $[0,x)$ and the atoms from $\xi_1$ restricted to the interval $(x,1]$.  If $x$ is independent of $\xi_1$ and $\xi_2$ then the resulting process $\xi'_1$ is also a Poisson($\alpha$) point process.  

\subsection*{Weak Convergence}
We give a brief definition of the version of local weak convergence that is used to define $\limtree$ and $\limdesc$.  See \cite{AS} or \cite{BS} for a proper discussion of local weak convergence, which is sometimes referred to as Benjamini-Schramm convergence.

Let $G_1, G_2 \cdots$ be a sequence of rooted graphs.  For any rooted graph $H$, the $r$-neighborhood of the root, denoted $H(r)$, is the subgraph of $H$ induced from all vertices that are distance at most $r$ from the root.  The rooted graph $G$ is the \emph{local weak limit} of $G_n$ if for every $r\geq 0$ and every finite graph $H$,
$$\prob[G_n(r) = H] \to \prob[G(r) = H].$$
 
\subsection*{From point process configurations to trees} 

\begin{figure}\centering
\includegraphics[scale=1]{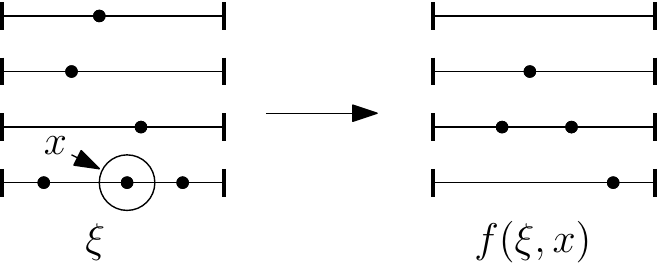}
\caption{The bump map $f(\xi,x)$ where $\xi_4$ is assumed to be empty.  }
\label{bumpmap}	
\end{figure}

Let $\xi=(\xi_k)_{k\geq 0}$ be a point process configuration on $[0,1]^\mathbb{N}$ where each $\xi_k$ is a point process on $[0,1].$  For each atom $x\in \xi_0$ define the bump map $f(\xi,x)= (\xi'_k)_{k\geq 0}$ where
$$\xi'_k = \xi_{k+1} \Big |_{[0,x)} + \xi_k \Big |_{(x,1]}.$$
See Figure \ref{bumpmap} for an illustration of this map.
Given a point process configuration, $\xi$, the bump map allows us to recursively define a tree with root $v_0$ whose vertices are point process configurations.  Define $v_0$ to be the root of the tree with corresponding point process configuration $\xi^{v_0} = \xi$.  Suppose $v$ is a vertex in the tree with corresponding point process configuration given by $\xi^v$.  For each $x\in \xi^v_0$, create a new vertex $v(x)$ in the tree with point process configuration given by the bump map $\xi^{v(x)} = f( \xi^v,x)$.  The newly created vertex $v(x)$ is a considered a child of $v$.  We call this tree the \emph{bump tree} of $\xi$ and denote it by $\gamma(\xi).$  For fixed $r\geq 0$ let $\gamma_r( \xi )$ denote the $r$-neighborhood of the root in $\gamma(\xi).$  Only the atoms in $(\xi_0, \cdots, \xi_{r-1})$ are necessary to determine the structure of the $\gamma_r(\xi)$, so we may write $\gamma_r(\xi) = \gamma_r( \xi_0, \cdots, \xi_{r-1} )$ and assume $\xi_{k} = \emptyset$ for $k\geq r$.   The map $\gamma_r$ is continuous because a slight perturbation of the atoms will not change the relative order of the points in $(\xi_0, \cdots, \xi_r)$.  See Figure \ref{xitree} for an example of a finite neighborhood of the root of the bump tree for a point process configuration.

\begin{figure}\centering
\includegraphics{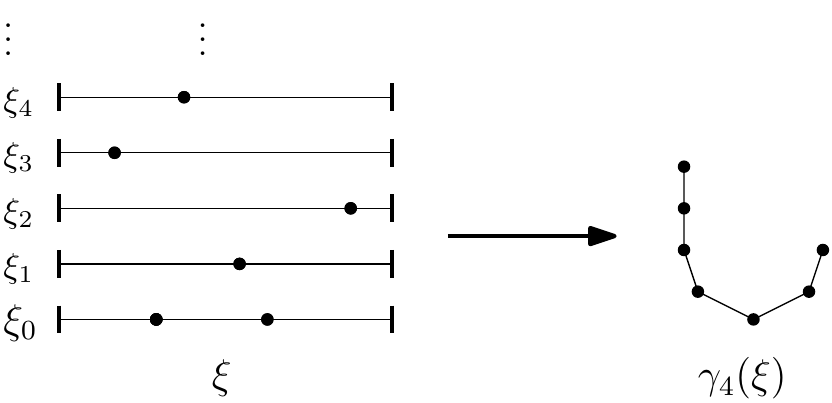}
\caption{A point process collection and corresponding 4-neighborhood of the bump tree.  Note that any configuration of point processes for $\xi_5$ and higher will not affect the structure of the bump tree and thus $\gamma_4(\xi) = \gamma(\xi)$. }
\label{xitree}
\end{figure}


For a permutation $\pi$ of length $n$, we say the index $i$ or the value $\pi(i)$ is \emph{$k$-separated} if $\pi(i) = i+k.$  We define the \emph{separation word} of $\pi$ point-wise by $\word^{\pi}(i):= \pi(i) - i$.  No two permutations have the same separation word.  From this word we can construct a {point process configuration} $(\xi^\pi_k)_{k\geq 0}$ by placing an atom in $\xi^\pi_k$ at position $i/n$ if $i$ is a $k$-separated point in $\pi$.

By Proposition 3.4 in \cite{JSS}, for fixed $r\geq 0$, as $n$ tends to infinity,
$$(\xi^{\pi_n}_0,\cdots , \xi^{\pi_n}_{r-1} ) \longrightarrow_d ( \xi_0, \cdots, \xi_{r-1} )$$
where $\xi_k$ is a $\Poi(1)$ point process on $[0,1]$.  From the arguments of Theorem 3.5 in \cite{JSS}, letting $\xi = ( \xi_k)_{k\geq 0}$, we have $\gamma_r( \xi^{\pi_n}) \to \gamma_r(\xi )$ by continuity of $\gamma_r$ and the Continuous Mapping Theorem [\cite{Billingsley}].  Furthermore, it is seen that $\gamma_r( \xi^{\pi_n} )$ is the same as the $r$-neighborhood of the descendant tree $\desc(\mathbf{\pi}_n)$ with high probability.  Therefore $\limdesc:=\gamma( \xi )$ is the local weak limit of $\desc(\mathbf{\pi}_n)$.

We now can state our main results.  For $\alpha \in (0,1]$, let $\xi = ( \xi_k)_{k\geq 0}$ be a collection of independent $\Poi(\alpha)$ point processes on $[0,1]$ and let $\limdesc_\alpha:=\gamma(\xi)$ be the corresponding bump tree of $\xi$.  Let $D$ denote the number of vertices and $U$ the number of leaves in $\limdesc_\alpha.$  Finally let $\E_\alpha$ and $\P_\alpha$ denote the expectation and probability associated with $\Poi(\alpha)$ point processes.  We now may state our main results.  

\begin{thm}\label{main}
For $0 < \alpha < 1$, $\E_\alpha[D] = (1-\alpha)^{-1}$, and $\E_1[D]$ diverges.
\end{thm}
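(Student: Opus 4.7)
\textit{Proof sketch.}  My plan is to exploit the self-similarity of the bump-tree construction in order to obtain a simple recursion for the expected number of vertices at each depth.

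The central distributional claim I would establish is that, conditional on $x$ being an atom of $\xi_0$, the subtree of $\gamma(\xi)$ rooted at the child $v(x)$ is itself a copy of $\limdesc_\alpha$.  To see this, first fix $x\in(0,1)$ and observe that the components $\xi'_k = \xi_{k+1}|_{[0,x)}+\xi_k|_{(x,1]}$ of the bumped configuration are sums of independent $\Poi(\alpha x)$ and $\Poi(\alpha(1-x))$ processes, hence each $\Poi(\alpha)$; and since the slices $\xi_k|_{[0,x)}$ (used in $\xi'_{k-1}$) and $\xi_k|_{(x,1]}$ (used in $\xi'_k$) are disjoint across the different layers, the family $(\xi'_k)_{k\geq 0}$ remains mutually independent.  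The Slivnyak--Mecke theorem then lets me remove $x$ from $\xi_0$ while preserving its $\Poi(\alpha)$ law on $[0,1]$, so conditionally on $x\in\xi_0$ the bumped configuration $f(\xi,x)$ has exactly the joint law defining $\limdesc_\alpha$.

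With self-similarity in hand, let $V_r$ denote the number of vertices at depth $r$ in $\limdesc_\alpha$, so that $D=\sum_{r\geq 0}V_r$ and $\E_\alpha[V_0]=1$.  Writing $V_{r+1}=\sum_{x\in\xi_0}V_r^{(x)}$, with $V_r^{(x)}$ the number of depth-$r$ vertices in the subtree rooted at $v(x)$, the self-similarity and the Campbell--Mecke formula give
\begin{equation*}
\E_\alpha[V_{r+1}] \;=\; \int_0^1 \alpha\, \E_\alpha\!\left[V_r^{(x)} \mid x\in\xi_0\right]\,dx \;=\; \int_0^1 \alpha\,\E_\alpha[V_r]\,dx \;=\; \alpha\,\E_\alpha[V_r].
\end{equation*}
Induction yields $\E_\alpha[V_r]=\alpha^r$, and monotone convergence gives $\E_\alpha[D]=\sum_{r\geq 0}\alpha^r$, which equals $(1-\alpha)^{-1}$ for $\alpha<1$ and diverges for $\alpha=1$.

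The main obstacle is the distributional self-similarity step: one must carefully verify both the Poisson law of each $\xi'_k$ and, more subtly, the joint independence of the whole family $(\xi'_k)_{k\geq 0}$ after the bump, since distinct bumped layers are glued together from pieces of several shared input layers.  Once this is in place, the level-by-level recursion and the $\alpha=1$ divergence follow immediately from Campbell--Mecke and monotone convergence, without any need to verify a priori finiteness of $\E_\alpha[D]$.
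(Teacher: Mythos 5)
Your argument is correct, but it proves the theorem by a genuinely different route than the paper. You work directly with the point processes: you establish that for fixed $x$ the bumped configuration $f(\xi,x)$ is again an independent family of $\Poi(\alpha)$ processes (the layer-$k$ output being assembled from the independent blocks $\xi_{k+1}|_{[0,x)}$ and $\xi_k|_{(x,1]}$, no block being reused), you pass to the Palm version via Slivnyak--Mecke, and you then get the level recursion $\E_\alpha[V_{r+1}]=\alpha\,\E_\alpha[V_r]$ from the Mecke equation. The paper instead encodes the configuration as a word $W$ in $\Omega_r$, identifies depth-$j$ vertices with \emph{complete} index sets $A$ of size $j$ (Lemma \ref{unicomp}, giving exactly $j!$ admissible fillings of $A$ when $r\geq j$), and computes $\E_{\alpha,r}[D^{(r)}_j]=\alpha^j$ by direct enumeration using Lemma \ref{specific}; both routes arrive at the same level expectations $\alpha^j$ and finish with monotone convergence. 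Your approach is shorter and more probabilistically transparent for the first moment, and it correctly sidesteps the dependence between sibling subtrees (which the paper flags as the obstruction to a naive Galton--Watson argument) because the Mecke equation only needs the one-point Palm distribution. What the paper's combinatorial setup buys in exchange is reusability: the same word/complete-set machinery is what computes the leaf count in Theorem \ref{leaves} and, crucially, the second moment in Theorem \ref{var}, where your method would require the joint two-point Palm structure of two bumped configurations --- precisely the dependence that the paper's quantity $x_r(A,B)$ captures combinatorially. One small point of care: your self-similarity statement ``conditional on $x$ being an atom'' is a measure-zero conditioning and is only rigorous through the Palm/Mecke formalism you invoke, so in a full write-up the recursion should be stated directly as an application of the Mecke equation to the nonnegative functional $g(x,\xi)=V_r^{(x)}$ rather than as a pointwise conditional law.
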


\begin{thm}\label{leaves}
For $0 < \alpha < 1$, $\E_\alpha[U] = e^{-\alpha}(1-\alpha)^{-1}$, and $\E_1[U]$ diverges.
\end{thm}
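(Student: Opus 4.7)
The plan mirrors the proof of Theorem \ref{main} and exploits the self-similar structure of the bump tree $\limdesc_\alpha$. The key structural input, which I expect is already established in the proof of Theorem \ref{main}, is that for each atom $x \in \xi_0$ the configuration $f(\xi,x) = (\xi'_k)_{k \ge 0}$ attached to the child $v(x)$ of the root is again a collection of independent $\Poi(\alpha)$ point processes on $[0,1]$. This follows from the splicing property recalled in Section \ref{defs}: $\xi'_k = \xi_{k+1}|_{[0,x)} + \xi_k|_{(x,1]}$ glues together pieces of independent $\Poi(\alpha)$ processes on complementary intervals of $[0,1]$, and the $\xi'_k$'s are mutually independent because each $\xi_j$ of the original family contributes to exactly two consecutive $\xi'_k$'s, and only on disjoint subintervals of $[0,1]$. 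Consequently the subtree rooted at $v(x)$ is marginally distributed as $\limdesc_\alpha$, so if $U^{(x)}$ denotes its number of leaves then $\E_\alpha[U^{(x)}] = \E_\alpha[U]$.

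For $0 < \alpha < 1$, I split $U$ according to whether the root is itself a leaf. The root has no children precisely when $\xi_0 = \emptyset$, an event of probability $e^{-\alpha}$, and otherwise contributes nothing to the leaf count, so
\begin{equation*}
U = \indc[\xi_0 = \emptyset] + \sum_{x \in \xi_0} U^{(x)}.
\end{equation*}
Taking expectations, linearity of expectation combined with $\E_\alpha[\abs{\xi_0}] = \alpha$ and the marginal identity above yields $\E_\alpha[U] = e^{-\alpha} + \alpha\, \E_\alpha[U]$. Finiteness of $\E_\alpha[U]$ is immediate from the bound $U \le D$ and Theorem \ref{main}, so rearranging gives $\E_\alpha[U] = e^{-\alpha}(1-\alpha)^{-1}$.

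For $\alpha = 1$ the above fixed-point equation degenerates, so I would instead work level by level. Let $L_n$ denote the number of leaves at depth exactly $n$ in $\limdesc_1$. The same self-similarity gives $L_n = \sum_{x \in \xi_0} L_{n-1}^{(x)}$ for $n \ge 1$ and $L_0 = \indc[\xi_0 = \emptyset]$, so $\E_1[L_n] = \E_1[\abs{\xi_0}] \E_1[L_{n-1}] = \E_1[L_{n-1}]$, and inductively $\E_1[L_n] = e^{-1}$ for every $n \ge 0$. Tonelli then gives $\E_1[U] = \sum_{n \ge 0} \E_1[L_n] = \infty$.

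The step I expect to require the most care is justifying the marginal equality in law of the subtree at $v(x)$ with $\limdesc_\alpha$, since $x$ is an atom of $\xi_0$ rather than a point independent of the configuration. This should follow from a Slivnyak--Mecke reduction applied to $\xi_0$, together with the independence of $\xi_0$ from $(\xi_k)_{k \ge 1}$; I expect this to be handled exactly as in the proof of Theorem \ref{main}, so no new obstacle should arise.
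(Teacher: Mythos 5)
Your argument is correct, but it is genuinely different from the paper's. The paper never uses the recursive self-similarity of the bump tree for this theorem: it works combinatorially in the word model, counting for each complete index set $A$ of size $j$ the number $\ell(r,n,A)=j!(r-1)^{n-j}$ of length-$n$ words for which the vertex reached by $A$ is a leaf, deducing $\E_{\alpha,r}[U^{(r)}_j]=e^{-\alpha}\alpha^j$, and then letting $r\to\infty$ by monotone convergence. Your route instead decomposes $U=\indc[\xi_0=\emptyset]+\sum_{x\in\xi_0}U^{(x)}$ and closes the fixed-point equation via the Mecke equation. The one place you must be careful is exactly the one you flag: the identity $\E_\alpha\bigl[\sum_{x\in\xi_0}U^{(x)}\bigr]=\alpha\,\E_\alpha[U]$ is not mere linearity of expectation, and the paper explicitly warns (Section 3) that the Galton--Watson conditioning argument fails here because sibling subtrees are dependent --- indeed $\E[U^{(x)}\mid |\xi_0|=k]\neq\E_\alpha[U]$, since the atoms of $\xi_0$ in $(x,1]$ reappear in $\xi'_0$. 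Your Mecke reduction sidesteps this by never conditioning on the number of children: for a \emph{fixed} $x$ the spliced configuration $f(\xi+\delta_x,x)=f(\xi,x)$ is again an independent $\Poi(\alpha)$ family (your disjoint-interval argument for mutual independence is right), so only the marginal law of each child subtree enters, and that suffices for first moments. With that step made explicit, together with the finiteness $U\le D$ from Theorem \ref{main} to justify the rearrangement, your proof is complete; your level-by-level computation for $\alpha=1$ in fact reproduces the paper's formula $\E_\alpha[U^{(r)}_j]=e^{-\alpha}\alpha^j$ for general $\alpha$, so the two approaches give the same intermediate quantities, yours by a shorter probabilistic argument and the paper's by explicit enumeration that also sets up the machinery needed for the second-moment bounds in Theorem \ref{var}.
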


\begin{thm}\label{var}
For $\alpha \geq ( \sqrt{5} - 1 ) / 2 $, $\E_\alpha(D^2)$ diverges.
For $\alpha < ( 3 - \sqrt{5}  ) / 2 $, $\E_\alpha(D^2)$ is finite.
\end{thm}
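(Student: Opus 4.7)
The plan is to study $\E_\alpha[D^2]$ through a fixed-point equation or inequality derived from the self-similar recursion $D = 1 + \sum_{x \in \xi_0} D^{(x)}$, where $D^{(x)} := |\gamma(f(\xi, x))|$ is the size of the subtree rooted at the child $v(x)$, mirroring the proof of Theorem~\ref{main}. Squaring, taking expectations, and applying the Slivnyak--Mecke formula to each sum (using that $f(\xi,x)$ has the same law as $\xi$) yields an identity of the form
\begin{equation*}
(1-\alpha)\,\E_\alpha[D^2] \;=\; 1 + 2\alpha\,\E_\alpha[D] + \alpha^2 \!\! \int_0^1 \!\! \int_0^1 C(x,y)\,dx\,dy,
\end{equation*}
where $C(x,y)$ is the joint cross-moment of the subtree sizes at two atoms at positions $x$ and $y$ (with the additional Palm atoms produced by Mecke folded in). The whole theorem reduces to controlling this off-diagonal integral.

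The structural observation driving both bounds is the following coupling: for $x < y$, writing $A=\xi|_{[0,x)}$, $B=\xi|_{(x,y)}$, $C=\xi|_{(y,1]}$, the configurations $f(\xi,x)$ and $f(\xi,y)$ share $A$ (shifted up one layer) on $[0,x)$ and share $C$ on $(y,1]$, but draw \emph{independent} data from $B$ on the middle (one uses $B_k$, the other $B_{k+1}$). So the two subtrees are positively correlated through shared left and right Poisson inputs and independent through the private middle inputs, a feature that distinguishes $\limdesc_\alpha$ from a Galton--Watson tree (whose second moment is always finite when $\alpha<1$) and drives the phase transition.

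For finiteness at $\alpha < (3-\sqrt{5})/2$, I would work with the truncation $D_n$ (restriction of $\limdesc_\alpha$ to depth at most $n$), which is square-integrable for each $n$, and show that $\E_\alpha[D_n^2]$ stays bounded uniformly in $n$ before passing to the limit by monotone convergence. Bounding $C(x,y)$ from above by Cauchy--Schwarz in the truncated recursion produces a linear inequality $\E_\alpha[D_n^2] \le K(\alpha) + L(\alpha)\,\E_\alpha[D_{n-1}^2]$; the key technical point is that the Palm version of Mecke inserts an additional atom into one of the two factors, inflating $L(\alpha)$ beyond the naive $\alpha(1+\alpha)$. A careful computation of this inflation produces $L(\alpha) < 1$ precisely for $\alpha$ below the smaller root of $\alpha^2 - 3\alpha + 1 = 0$.

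For divergence at $\alpha \ge (\sqrt{5}-1)/2$, I would produce a matching lower bound on the same integral by exploiting the positive correlation: conditioning on the shared data $(A, C)$ and applying Jensen's inequality gives $C(x,y) \ge \E_\alpha\bigl[\E_\alpha[D^{(x)} \mid A, C]^2\bigr]$, and further exploiting the self-similar structure on the conditional mean should yield an opposite inequality of the form $(1 - \alpha - \alpha^2)\,\E_\alpha[D^2] \ge K'(\alpha) > 0$, forcing $\E_\alpha[D^2] = +\infty$ as soon as $\alpha + \alpha^2 \ge 1$. The main obstacle is quantifying $C(x,y)$ sharply enough in both directions: the gap between the two thresholds reflects genuine slack between the easy upper bound (Cauchy--Schwarz after Mecke) and the easy lower bound (Jensen on the shared randomness), and closing it would demand a finer analysis of the joint dependence of $D^{(x)}$ on its Poisson inputs. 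A secondary challenge is propagating the Palm corrections cleanly through the truncated recursion so that the limit $n \to \infty$ passes through.
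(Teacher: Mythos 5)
Your structural observation is correct and worth having: for $x<y$ the configurations $f(\xi,x)$ and $f(\xi,y)$ agree (layer-shifted) on $[0,x)$ and on $(y,1]$ and use independent Poisson layers on $(x,y)$, and this dependence between sibling subtrees is exactly why the Galton--Watson recursion fails here. But the proposal is a plan rather than a proof, and its two decisive quantitative steps are asserted, not derived. For the divergence direction, the inequality $C(x,y)\ge \E_\alpha\bigl[\E_\alpha[D^{(x)}\mid A,C]^2\bigr]$ is not valid as written: conditional independence given the shared data gives $\E_\alpha[D^{(x)}D^{(y)}\mid A,C]=\E_\alpha[D^{(x)}\mid A,C]\,\E_\alpha[D^{(y)}\mid A,C]$, and these two conditional means are \emph{different} random variables (the split points differ), so Jensen does not produce a square. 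More seriously, to conclude $(1-\alpha-\alpha^2)\E_\alpha[D^2]\ge K'>0$ you need the off-diagonal integral $\int\!\!\int C(x,y)\,dx\,dy$ to be bounded below by $\E_\alpha[D^2]$ minus lower-order terms, i.e.\ the two subtrees must be correlated at the level of second moments, not merely positively correlated; nothing in the sketch establishes this. For the finiteness direction, Cauchy--Schwarz gives $C(x,y)\le\bigl(\E_\alpha[(\tilde D^{(x)})^2]\,\E_\alpha[(D^{(y)})^2]\bigr)^{1/2}$ with a Palm-augmented factor $\tilde D^{(x)}$, and the claim that the resulting contraction constant satisfies $L(\alpha)<1$ exactly for $\alpha<(3-\sqrt 5)/2$ requires the augmentation to inflate the second moment by the specific factor $1/\bigl(\alpha(1-\alpha)\bigr)$; no computation is offered, and it is not clear the crude bound $2\E_\alpha[D^2]+2\E_\alpha[(\text{extra})^2]$ lands on that constant. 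In short, the thresholds are quoted from the statement rather than produced by the argument.

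For comparison, the paper avoids the recursion entirely: it writes $D$ as $\sum_A \indc_A(W)$ over \emph{complete} index sets $A$ (Lemma \ref{unicomp} characterizes completeness via inversion tables), so $D^2$ becomes a sum over pairs $(A,B)$ classified by $a=|A\setminus B|$, $b=|B\setminus A|$, $c=|A\cap B|$. The joint probability that both $A$ and $B$ are complete is computed exactly in terms of a counting function $x_r(A,B)$, and Lemma \ref{lemchi} gives uniform upper and lower bounds on $x_r(A,B)$ depending only on $(a,b,c)$ (the extremes occur when $A\cap B$ sits entirely below or entirely above the symmetric difference). The two thresholds then fall out of explicit generating-function sums: the lower bound diverges iff $\alpha^2/(1-\alpha)\ge 1$ and the upper bound converges iff $\alpha/(1-\alpha)^2<1$. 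If you want to salvage your route, the place to look is an exact evaluation of the off-diagonal integral in your Mecke identity by expanding each subtree size again as a sum of completeness indicators --- at which point you will have reproduced the paper's pair-counting argument.
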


\section{Comparison with Galton-Watson trees}

In this section we compare our results to the well-studied Galton-Watson tree \cite{GaltonWatson, AIHPB_1986__22_2_199_0}.    

A Galton-Watson tree, $\gw$, can be constructed through a simple random process.  Start with a root $v_0$ and a nonnegative integer-valued random variable $X$.  Create $X_{v_0}$ children of $v_0$ where $X_{v_0}$ is distributed as and independent copy of $X$.  For each child, $v$, of $v_0$ repeat this process, where $X_v$ is an independent copy of $X$.  Depending on the distribution of $X$, the resulting tree will have drastically different behavior.    

Fix a nonnegative integer-valued random variable $X$ with finite expectation $0< \E[X] < 1$ and finite second moment $\E[X^2]< \infty.$  Let $Y = |\gw|$.  Let $X$ denote the number of children of the root of $\gw$ and for $1\leq i \leq X$, let $Y^i$ denote the number of vertices in the subtree consisting of the $i$th child and all of its descendants.  Each $Y^{i}$ is distributed identically as an independent copy of $\gw$.  We denote the size of $\gw$ conditioned on $X$ by $(Y|X) = 1+ \sum_{i=1}^X Y^i$.  Taking expectation we have $\E[(Y|X)] = 1+ X\E[Y]$ and thus
$$\E[Y] = \E[\E[(Y|X)]] = 1+ \E[X]\E[Y]$$ and so 
$$\E[Y] = \frac{1}{1-\E[X]}.$$

A similar approach for the second moment gives the equation
$$
\E[Y^2] = 1 + \E[X]\E[Y]+\E[X]\E[Y^2] + \E[X^2 - X]\E[Y]^2, 
$$
which can be simplified to 
\begin{equation}\label{gwvar}
\E[Y^2] = \frac{1}{(1-\E[X])^2} + \frac{\E[X^2]-\E[X]}{(1-\E[X])^3}. 
\end{equation}
Given that $\E[X]<1$ and $\E[X^2]$ is finite, \eqref{gwvar} shows that $\E[Y^2]$ finite.  In particular if $X$ is $\Poi(\alpha)$ then $\E[Y]$ agrees with $\E_\alpha[D]$ from Theorem \ref{main}, while Theorem \ref{var} shows the second moment $\E[Y^2]$ cannot agree with the second moment $\E_\alpha[D^2]$ if $\alpha\geq(\sqrt{5}-1)/2$ since the former is finite while the latter diverges.

The approach used to compute $\E[Y]$ and $\E[Y^2]$ cannot be used to compute $\E_\alpha[D]$ and $\E_\alpha[D^2]$ because the subtrees from the root in $\limdesc_\alpha$ are not independent of each other.

%

\section{Words from point process configurations}

\begin{figure}\centering
\includegraphics{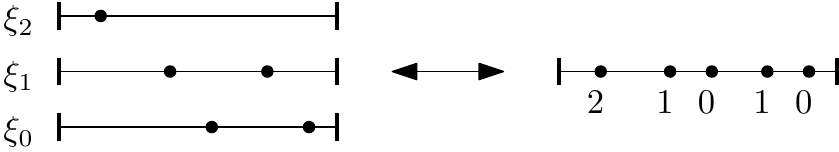}
%
%
%
%
%
	\caption{A collection of point processes corresponding to the word $2 \ 1 \ 0 \ 1 \ 0 $.}
	\label{pp2word}
\end{figure}

For a collection of point processes on $[0,1]$, $\xi=\{\xi_k\}_{k\geq 0}$, let $\bw_r(\xi)$ be the word constructed from the relative order of the atoms in $( \xi_0, \cdots, \xi_{r-1} )$.  For example see Figure \ref{pp2word}.  Assuming that no two atoms of $\xi$ are in the same location, the structure of the $r$-neighborhood of the root in the tree $\gamma_r(\xi)$ can be constructed directly from this word.  Let $\Omega_r$ denote the space of finite words with letters from $\{0,\cdots,r-1\}$.


If $\xi$ is a $\Poi(\alpha)$ point process configuration, this induces a probability measure $\P_{\alpha,r}$ on $\Omega_r$ for every $r\geq 0$.  The following lemma describes this distribution.
  
\begin{lemma} \label{size}
	Let $\xi$ be a $\Poi(\alpha)$ point process configuration and $W=\bw_r(\xi)$ the word given by the relative order of the first $r$ point processes of $\xi$.  Let $w$ denote a fixed word of length $n$ in $\Omega_r$. Then
	\begin{equation}\label{wordlengthn}
\P_{\alpha,r}( |W| = n ) = \frac{1}{n!}e^{-\alpha r}\alpha^nr^n		
	\end{equation} and 
	\begin{equation}\label{wordisw}
	\P_{\alpha,r}( W = w ) = \frac{1}{n!}e^{-\alpha r}\alpha^n.	
	\end{equation}

\end{lemma}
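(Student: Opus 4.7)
The plan is to exploit the superposition and coloring properties of Poisson point processes reviewed in the introduction, together with the fact that conditioned on its count, a Poisson process places its atoms uniformly at random. Since the labels of the atoms contribute independently, the length of $W$ and the identity of $W$ given its length decouple cleanly.

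First, I would observe that $\bw_r(\xi)$ depends only on the atoms of $\xi_0, \ldots, \xi_{r-1}$. Let $\xi' = \xi_0 \cup \cdots \cup \xi_{r-1}$ be the superposed point process on $[0,1]$, obtained by forgetting the labels. By the independence of $\xi_0, \ldots, \xi_{r-1}$ and the superposition property for Poisson processes recalled in Section \ref{defs}, $\xi'$ is a $\Poi(\alpha r)$ point process on $[0,1]$, and the total number of atoms $N = |\xi'|$ equals $|W|$. This gives \eqref{wordlengthn} directly from the Poisson probability mass function: $\P_{\alpha,r}(|W|=n) = \frac{1}{n!} e^{-\alpha r}(\alpha r)^n$.

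For \eqref{wordisw}, I would argue in two steps. Conditioned on $N = n$, the $n$ atoms of $\xi'$ are i.i.d.\ uniform on $[0,1]$, and (because atom positions are almost surely distinct) they have a well-defined order statistic $x_{(1)} < \cdots < x_{(n)}$. By the converse of the Poisson coloring construction recalled in Section \ref{defs}, the label of each atom of $\xi'$ (that is, which $\xi_i$ it belongs to) is an independent draw with $\P(\text{label} = i) = \alpha/(\alpha r) = 1/r$, for $0 \le i < r$, independently of the positions. Thus, given $N = n$, each specific word $w \in \Omega_r$ of length $n$ appears as the label sequence of $(x_{(1)}, \ldots, x_{(n)})$ with probability $r^{-n}$. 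Multiplying,
\begin{equation*}
\P_{\alpha,r}(W = w) \;=\; \P_{\alpha,r}(|W| = n) \cdot r^{-n} \;=\; \frac{1}{n!} e^{-\alpha r} (\alpha r)^n \cdot r^{-n} \;=\; \frac{1}{n!} e^{-\alpha r} \alpha^n,
\end{equation*}
which is \eqref{wordisw}.

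I do not expect any serious obstacle here: both identities are immediate consequences of properties of Poisson processes that are stated in the preceding subsection. The only point requiring mild care is justifying that the atom positions of $\xi'$ are almost surely distinct so the induced word is unambiguous, which holds because the Borel measure of any single point is zero and hence each pair of atoms collides with probability zero. Once that null set is discarded, the two displayed formulas follow line by line as above.
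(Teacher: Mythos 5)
Your proposal is correct and follows essentially the same route as the paper: both arguments identify the superposition of $\xi_0,\ldots,\xi_{r-1}$ with a single $\Poi(\alpha r)$ process to get \eqref{wordlengthn}, and then use the independent uniform labeling of atoms to divide by $r^n$ for \eqref{wordisw}. The only cosmetic difference is the direction of the construction (you merge the $r$ processes, the paper splits one process into $r$), which changes nothing substantive.
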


\begin{proof}

Construct the $r$ independent $\Poi(\alpha)$ point processes from a single $\Poi(r \alpha)$ point process by labeling each atom independently from $\{0,\cdots, r-1\}$, choosing the label uniformly at random.  The probability that $|W| = n$ is precisely the probability that a $\Poi(r \alpha)$ point process has $n$ atoms in $[0,1]$, the right hand side of \eqref{wordlengthn}.  As the labeling is independent for each atom, each of the $r^n$ possible labelings is equally likely, so the probability that $W = w$ for a fixed $w$ of length $n$ is computed by dividing the right hand side of \eqref{wordlengthn} by $r^n$, giving \eqref{wordisw}.
\end{proof}

For $W\in \Omega_r$ of length $n$ we write $W = W_1\cdots W_n$ in one line notation. For a fixed subset of indices $A = (i_1,\cdots, i_j)$ let $W_A = W_{i_1}\cdots W_{i_j}$.  We may refine Lemma \ref{size} even further.   

\begin{lemma} \label{specific}

Let $u = u_1 \cdots u_j$ be a word in $\Omega_r$.  Let $W \in \Omega_r$, and $A = (i_1,\cdots, i_j)$ be a set of indices such that $1\leq i_1 < \cdots < i_j \leq n$.  Then, 

$$\P_{\alpha,r}\left( \{W_A = u\}  \cap \{|W| = n\} \right ) =  \frac{1}{n!}e^{-\alpha r} \alpha^n r^{n-j} .$$
	
\end{lemma}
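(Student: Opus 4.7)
The plan is to piggyback on the construction used in the proof of Lemma \ref{size}: build the configuration of $r$ independent $\Poi(\alpha)$ point processes by first sampling a single $\Poi(r\alpha)$ point process on $[0,1]$ and then labeling each atom independently and uniformly from $\{0,\ldots,r-1\}$. Under this construction, $|W|$ is simply the total number of atoms of the underlying $\Poi(r\alpha)$ process, and $W$ is read off by listing the labels in the spatial order the atoms appear.

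First I would condition on the event $\{|W| = n\}$. By the defining properties of a Poisson point process, given $n$ atoms in $[0,1]$, their locations are i.i.d.\ uniform on $[0,1]$; in particular they are almost surely distinct, and after ordering them from left to right we obtain $n$ positions to which we attach labels. Crucially, the labels are assigned independently of the positions and of each other, each uniform on $\{0,\ldots,r-1\}$. So conditional on $\{|W|=n\}$, the word $W$ is a uniformly random element of $\{0,\ldots,r-1\}^n$.

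Next I would fix the index set $A = (i_1,\ldots,i_j)$ and the target word $u = u_1\cdots u_j$. The event $\{W_A = u\}$ only constrains the labels of the atoms at spatial ranks $i_1,\ldots,i_j$, requiring each such label to equal the prescribed $u_\ell$. Since these $j$ labels are independent and each is uniform on a set of size $r$, the conditional probability of $\{W_A = u\}$ given $\{|W|=n\}$ equals $r^{-j}$.

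Finally, multiplying by $\P_{\alpha,r}(|W|=n) = \frac{1}{n!}e^{-\alpha r}\alpha^n r^n$ from Lemma \ref{size} yields
\[
\P_{\alpha,r}\bigl(\{W_A = u\}\cap\{|W|=n\}\bigr) = r^{-j}\cdot \frac{1}{n!}e^{-\alpha r}\alpha^n r^n = \frac{1}{n!}e^{-\alpha r}\alpha^n r^{n-j},
\]
as required. There is no real obstacle here: the only subtlety worth mentioning explicitly is that the labels are independent of the atom locations, which is what makes the conditioning on $\{|W|=n\}$ leave the label distribution untouched. Everything else is direct application of Lemma \ref{size} and the thinning/labeling description of a Poisson point process.
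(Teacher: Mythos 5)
Your proof is correct and takes essentially the same route as the paper: condition on $\{|W|=n\}$, observe that the labels of the atoms indexed by $A$ are independent and uniform on $\{0,\ldots,r-1\}$ so the conditional probability of $\{W_A=u\}$ is $r^{-j}$, and multiply by the formula from Lemma \ref{size}. You simply spell out in more detail the labeling construction that the paper's proof invokes implicitly.
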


\begin{proof}

Conditioned on $|W|=n$, the labels of the atoms indexed by $A$ are chosen independently so $$\P_{\alpha,r}(W_A=u| |W |= n) = r^{-j}$$ and the statement follows.   
\end{proof}

The tree $\gamma_r(\xi)$ with word $\bw_r(\xi)$ will agree up to a relabeling of the vertices of the tree $\gamma_r(\xi')$ if $\bw_r(\xi) = \bw_r(\xi').$  A vertex in the tree corresponds to bumping a particular set of atoms in a particular order.  Therefore the measure $\P_{\alpha,r}$ on words in $\Omega_r$ is exactly the measure we need to understand the $\gamma_r(\xi)$.  

We can translate our language of bumping atoms in $\xi$ to bumping letters in words.  Let $W\in \Omega_r$. For each $0 \in W$, we construct a new word by removing the chosen $0$ and reducing every letter to the left of it by $1.$  We say the index of this letter $0$ is \emph{bumped} and indices less than the bumped index are \emph{shifted}.  The set of indices of the $0$s in a word are called the \emph{bumpable} indices.  The set of words that can be constructed by bumping a single $0$ in $W$ are called the children of $W$ and denoted $\mathcal{C}(W).$  For example the word $2 \ 1 \ 0 \ 1 \ 0$ has has two children, $1 \ 0 \  \inert \ 1  \ 0$ and $1 \ 0 \ \inert \ 0 \ \inert$, where $\inert$ is used to indicate bumped indices or indices shifted below zero.  Once the letter at an index becomes $\inert$ in a word it can never become $0$ in one of its descendants.  We construct a rooted tree, denoted $\wtree(W)$, following a process that mirrors our construction of $\gamma(\xi)$ for point process configurations.  We let $\wtree_j(W)$ denote the $j$-neighborhood of the root in $\wtree(W)$.   

We may omit the $\inert$ symbol in the labeling of the tree.  The $\inert$ symbol is used to emphasize that the set of indices is the same for each word in the same tree.  See Figure \ref{wordtreefig} for the rooted tree in $\Omega_3$ associated with the word $2 \ 1 \ 0 \ 1 \ 0$.  The sequence of indices that are bumped to reach the vertex $v$ in $\gamma(W)$ is called the bumping sequence of $v$.  

\begin{figure}\centering
\begin{tikzpicture}[
every fit/.style={ellipse,draw,inner sep=2pt},
grow'=up,
leaf/.style={draw, fill, circle, minimum size=4pt, inner sep =1pt},
level 1/.append style={sibling distance=50mm, level distance =12mm},
level 2/.append style={sibling distance=20mm, level distance =10mm},
level 3/.append style={sibling distance=10mm, level distance = 9mm},
level 4/.append style={sibling distance=5mm, level distance = 8mm},
]
\node[draw] (root){$2 \ 1\ 0 \ 1\ 0$} 
	child{ node [draw] {$1 \ 0 \ 1\ 0$ } 
  		child{node [draw] {$ 0 \ 1 \ 0$}
  			child{node [draw]{$1 \ 0$}
  				child{node [draw] {$0$}
  					child{node [draw] {$\emptyset$}
  					}
  				}
  			}
  			child{node [draw] {$0$}
  				child{node [draw] {$\emptyset$}
  				}
  			}
  		}
  		child{node [draw] {$ 0 \ 0$}
  			child{node [draw] {$ 0 $}
  				child{node [draw] {$\emptyset$}
  				}
  			}
  			child{node [draw] {$\emptyset$}
  			}
  		}
  	}
  	child{node [draw] {$ 1 \ 0\ 0$}
  		child{node [draw] {$ 0 \ 0$}
  			child{node [draw] {$ 0 $}
  				child{node [draw] {$\emptyset$}
  				}
  			}
  			child{node [draw] {$\emptyset$}
  			}
  		}
  		child{node [draw] {$0$}
  			child{node [draw] {$\emptyset$}
  			}
  		}
  	}
;	
\end{tikzpicture}
\caption{The tree, $\wtree(\bw)$, for the root word $\bw = 2 \ 1 \ 0 \ 1 \ 0$}
\label{wordtreefig}
\end{figure}

For $j \geq 1$ and every vertex $v \in \wtree_j(W)\backslash \wtree_{j-1}(W)$ there is a corresponding set of $j$ atoms that must be bumped in a particular order to reach $v$.  This sequence of atoms induces an ordered set of indices $A = \{ a_1< \cdots <a_j \}$ and permutation, $\sigma$, of length $j$ such that $v$ is obtained by bumping the atoms at the indices in order $\{a_{\sigma_{1}},\cdots, a_{\sigma_{j}}\}$ where each of the indices must be $0$ when they are bumped.  We say the set of indices $A$ reaches $v$ by the order $\sigma$.  Since $\wtree(W)$ is a tree, any such $v$ is reachable by a unique pair $(A,\sigma)$.  

For a set of indices $A = \{ a_1< \cdots <a_j \}$, we say $A$ is \emph{complete} in $W$ if there exists an order $\sigma \in \mathfrak{S}_j$ and a sequence of words $W=W^0, \cdots, W^j$ such that for $1\leq i \leq j$, $W^{i} \in \mathcal{C}(W^{i-1})$ is obtained by bumping the index $a_{\sigma_i}$ in $W^i$.  Whether or not $A$ is complete in $W$ is independent of the letters not in $A$.  The following lemma gives conditions on when $A$ is complete in $W$.

\begin{lemma}\label{unicomp}

If $A$ is complete in $W\in \Omega_r$ with $|A| = j$, there is a unique $\sigma \in \mathfrak{S}_j$ such that a vertex in $\wtree(W)$ is reachable by $(A,\sigma)$.  If $r\geq j$, then for each $\sigma \in \mathfrak{S}_j$ there is a unique sequence of values $u=u_1\cdots u_j$ such if $W_A = u$ then there exists a vertex in $\wtree(W)$ that is reachable by $(A,\sigma)$.  

Finally, $A$ is complete with respect to $W$ if and only if $W_{a_i} \leq \min(j-i,r-1)$ for $1\leq i \leq j$.

\end{lemma}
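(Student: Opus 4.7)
The plan is to track how the value at each index in $A$ evolves during a bumping sequence and then translate the completeness condition into a statement about permutations via the Lehmer code bijection. Suppose one bumps in order $\sigma \in \mathfrak{S}_j$, so that $a_{\sigma_k}$ is the $k$th index bumped, and let $t_i := \sigma^{-1}(i)$ denote the step at which $a_i$ is bumped. Since bumping any index $a$ decrements the letters at all positions strictly less than $a$, the value at position $a_i$ just before step $k$ equals
$$V_i(k) = W_{a_i} - |\{m < k : a_{\sigma_m} > a_i\}|.$$
For $a_i$ to be bumpable at step $t_i$ one needs $V_i(t_i) = 0$, and the non-decreasing nature of the count in $k$ then forces $V_i(k) \geq 0$ for every $k \leq t_i$, so $a_i$ is not sent to $\inert$ prematurely. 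Consequently, $\sigma$ realizes $A$ in $W$ if and only if $W_{a_i} = c_i(\sigma)$ for every $i$, where $c_i(\sigma) := |\{i' > i : \sigma^{-1}(i') < \sigma^{-1}(i)\}|$.

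The vector $(c_1(\sigma), \ldots, c_j(\sigma))$ is the classical Lehmer code of the word $\sigma^{-1}(1)\cdots\sigma^{-1}(j)$, automatically satisfies $c_i(\sigma) \leq j - i$, and the well-known Lehmer bijection gives a one-to-one correspondence between $\sigma \in \mathfrak{S}_j$ and integer sequences $(c_i)$ with $0 \leq c_i \leq j - i$. With this setup all three claims follow quickly. If $A$ is complete in $W$, the identity $c_i(\sigma) = W_{a_i}$ combined with the Lehmer bijection determines $\sigma$ uniquely. For $r \geq j$ and a fixed $\sigma$, the unique $u$ realizing $\sigma$ must satisfy $u_i = c_i(\sigma)$, and the bound $c_i(\sigma) \leq j - i \leq j - 1 \leq r - 1$ shows this $u$ is a valid word in $\Omega_r$. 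Necessity in the characterization is immediate from $W_{a_i} = c_i(\sigma) \leq j - i$ together with $W \in \Omega_r$ (which gives $W_{a_i} \leq r - 1$); for sufficiency, given $W_{a_i} \leq \min(j - i, r - 1)$, use the Lehmer bijection to pick the unique $\sigma$ with $c_i(\sigma) = W_{a_i}$ and check via the monotonicity of $V_i(k)$ that this $\sigma$ is a legitimate bumping order.

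The main subtle point is ensuring that the bumping sequence extracted from the Lehmer code does not cause any $a_i$ to become $\inert$ before step $t_i$. This is exactly what the monotonicity of $k \mapsto |\{m < k : a_{\sigma_m} > a_i\}|$ provides: the intermediate value $V_i(k)$ decreases from $W_{a_i}$ down to $0$ monotonically and never drops below zero, so every intended bump at step $t_i$ is legal and the construction delivers a valid bumping sequence.
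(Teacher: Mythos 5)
Your proposal is correct and follows essentially the same route as the paper: you derive that a bumping order $\sigma$ realizes $A$ exactly when $W_{a_i}$ equals the inversion count $\#\{i'>i : \sigma^{-1}(i')<\sigma^{-1}(i)\}$, and then invoke the inversion-table (Lehmer code) bijection to get uniqueness and the characterization $W_{a_i}\leq\min(j-i,r-1)$. The only addition is your explicit monotonicity check that no index drops below zero (becomes $\inert$) before its scheduled bump, which the paper leaves implicit; this is a welcome bit of extra rigor but not a different argument.
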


\begin{proof}

Since $A$ is complete in $W$ there is at least one $\sigma\in S_j$ and $v$ in $\wtree(W)$ such that $v$ is reachable by $(A,\sigma)$.  First $a_{\sigma_1}$ is bumpable if and only if $W_{a_{\sigma_1}} = 0.$  In order for $a_{\sigma_{i+1}}$ to be bumpable after bumping $a_{\sigma_{1}}$ up to $a_{\sigma_{i}}$, the label of $a_{\sigma_{i+1}}$ must be $0$, and therefore index must be shifted exactly $W_{a_{\sigma_{i+1}}}$ times by bumping indices larger then $a_{\sigma_{i+1}}.$  For this to occur there must be exactly $W_{a_{\sigma_{i+1}}}$ integers $m$ such that $m<i+1$ and $\sigma_m > \sigma_{i+1}.$  In terms of $\sigma^{-1}$ we have for $1\leq i \leq j$,
 $$W_{a_{i}} = \#\{ i<m \leq j | \sigma^{-1}_{i} > \sigma^{-1}_m\}.$$
 The sequence of values $W_{a_1} \cdots W_{a_{j}}$ is the unique inversion table (\cite{knuth}) for the permutation $\sigma^{-1}$.  No two permutations have the same inversion table and thus $\sigma$ must be unique.  Given a $\sigma\in \mathfrak{S}_j$, if $W_A$ is the inversion table for $\sigma^{-1}$ then $A$ will be complete with respect to $W$.
 
 Finally we have that $W_{a_1}\cdots W_{a_j}$ is an inversion table if and only if $W_{a_{i}} \leq j-i$ for $1\leq i \leq j$.  We also have that $W_{a_i} \leq r-1$ by definition.  
 \end{proof} 

Define the following truncated factorial function:
$$
f_y(x) = \left \{  \begin{array}{ll}  x!,&  x \leq y, \\
y! y^{x-y}, & y < x.  	
 \end{array} \right .$$
Note that $\lim _{y \to \infty} f_y(x) = x!$.

Let $\cwords_r(j)$ denote the set of subwords of length $j$ such  such that $A$ is complete in $W$ if and only if $W_A \in \cwords_r(j)$.  For any $r\geq 0$ and $j\geq 0$, by Lemma \ref{unicomp},
 $$| \cwords_r(j) | = f_r(j)$$
 and for $r\geq j$, this simplifies to
  $$| \cwords_r(j) | = j!.$$

\section{Expectation of $D$ and $U$}

Let $D^{(r)}$ denote the number of vertices in $\gamma_r(\xi)$.  Let $U^{(r)}$ denote the number of leaves in $\gamma_r(\xi)$ that are distance less than $r$ from the root.  Note that a leaf in $\gamma_r(\xi)$ that is distance $r$ from the root may not be a leaf in $\gamma_{r+1}(\xi).$  By Theorem 5.1 in  \cite{JSS}, the longest path to a leaf in $\gamma(\xi)$ is almost surely finite and therefore $\gamma_r(\xi)$ is identical to $\gamma(\xi)$ for large enough $r.$  To compute the expectation of $D$ and $U$ it suffices to compute the expectation of $D^{(r)}$ and $U^{(r)}$ and let $r$ tend to infinity.    

Let $W$ be chosen from $\Omega_r$.  For $j\leq r$ let $D^{(r)}_j = |\wtree_j(W)\backslash \wtree_{j-1}(W)|.$  Similarly let $\leaves_j$ denote the set of leaves in $\wtree_j(W)$, so that for $j\leq r-1$, $U^{(r)}_j=|\leaves_j(W)\backslash\leaves_{j-1}(W)|$, the number of leaves in $\wtree_j(W)$ exactly distance $j$ from the root.  By linearity of expectation $$\E_{\alpha,r}[D^{(r)}] = \sum_{j=0}^{r} \E_{\alpha,r}[D^{(r)}_j]$$ and $$\E_{\alpha,r}[U^{(r)}] = \sum_{j=0}^{r-1} \E_{\alpha,r}[U^{(r)}_j].$$  

For a fixed $j \leq n$, let $\mathcal{A}$ be the set of all subsets of $j$ indices $A\subseteq [n]$. Consider a fixed $A  \in \mathcal{A} $ and a word $u$ of length $j$ with letters less than $r$. If a word $W\in \Omega_r$ has length $n$, there are $r^{n-j}$ possible fillings of the indices in $[n] \setminus A $ and there are $f_r(j)$ ways to fill the indices of $A$ so that $A$ is complete in $W$.  

By Lemma \ref{specific} we have
\begin{equation}\label{compprob}
\P_{\alpha,r} \left(\{ A \text{ is complete in } W\}\cap \{|W|=n\} \right) =  e^{-\alpha r}\alpha^nr^{n-j}f_r(j)/n!.	
\end{equation}

By the one-to-one correspondence with complete indices $A$ in $W$ of size $j$ with vertices in $\wtree(W)$ exactly distance $j$ from the root, the expectation of $D^{(r)}_j$ is 

\begin{equation}\label{Ecompprob}
\E_{\alpha,r}[ D^{(r)}_j\indc_{|W|=n} ] = \sum_{A \in \mathcal{A}} e^{-\alpha r}\alpha^nr^{n-j}f_r(j)/n! = 	e^{-\alpha r}\alpha^nr^{n-j} f_r(j) /( j! (n-j)!).
\end{equation}
For $r \geq j$, 

\begin{equation}
\E_{\alpha,r} [ D^{(r)}_ {j}\indc_{|W|=n} ]  = e^{-\alpha r}\alpha^nr^{n-j}/ (n-j)!,
\end{equation}
and $\E_{\alpha,r} [ D^{(r)}_{j} ]  = \sum_{n \geq j} \E [ D^{(r)}_{j}\indc_{|W|=n} ]$, so
\begin{equation} \label{simpleton}
\E_{\alpha,r} [ D^{(r)}_{j} ] =   \alpha^j e^{-\alpha r}\sum_{n\geq j}\frac{(\alpha r)^{n-j}}{ (n-j)!} = \alpha^j.
\end{equation}

\begin{proof}[of Theorem \ref{main}]
 From \eqref{simpleton}, $\E_{\alpha,r}[D^{(r)}_j] = \alpha^j$ for $j\leq r$ and $\E_{\alpha,r}[D^{(r)}] = \sum_{j=0}^r \alpha^j$.  Then $\lim_{r\to\infty }D^{(r)} = D$ and by Monotone Convergence Theorem
$$\E_\alpha[D]=\lim_{r\to \infty} \E_{\alpha,r}[D^{(r)}] = \lim_{r\to\infty} \frac{1-\alpha^{r+1}}{1-\alpha} = \frac{1}{1-\alpha}.$$
\end{proof}

\subsection*{Expected number of leaves}

For a set of indices $A$ of size $j$ that are complete in $W$, let $X$ denote the word obtained after bumping every index in $A$.  The vertex labelled with $X$ is a leaf if it contains no bump-able indices, that is $X$ has no $0$s.  Let $a_0=0$ and $a_{j+1} = |W|+1.$  For $0\leq i \leq j$, an index $b_i \in (a_{i},a_{i+1})$ is bump-able in $X$ if and only if $W_{b_i} = j-i.$      If $r\leq j$ and $i \leq j-r$, $W_{b_i} <r \leq j-i$ and hence $b_i$ cannot be bump-able.  Otherwise if $i> j-r$, there are $r-1$ choices for $W_{b_i}$ so that $b_i$ is not bump-able.

Let $\ell(r,n,A)$ denote the number words, $w$ of length $n$ in $\Omega_r$ such that $A$ corresponds to a leaf in $\wtree(w)$.  There are $f_r(j)$ possible ways to fill in the indices of $A$.  For $r\leq j$,
\begin{equation}
\ell(r,n,A) = f_r(j)r^{\sum_{i=0}^{j-r}(a_{i+1}-a_i - 1) }(r-1)^{\sum_{i=j-r+1}^{j}(a_{i+1}-a_i - 1) } .	 	
\end{equation}
For $j<r$ this simplifies to \begin{equation}
 	\ell(r,n,A)=j!(r-1)^{n-j}.
 \end{equation}

Thus for $j<r$ we have
\begin{equation}
	\P_{\alpha,r} \left( \Big\{ |W| = n \Big\} \bigcap \Big\{ X \text{ is a leaf} \Big\} \right) =  e^{-\alpha r}\alpha^n(r-1)^{n-j}j!/n!.
\end{equation}
For $j<r$ the expectation of $U^{(r)}_j\indc_{\{|W|=n\}}$ is
\begin{equation}
\E_{\alpha,r} [ U^{(r)}_j\indc_{\{|W|=n}\}] = \sum_{A \in \mathcal{A}} e^{-\alpha r}\alpha^n(r-1)^{n-j}j!/n! = e^{-\alpha r}\alpha^n(r-1)^{n-j}/ (n-j)!.
 \end{equation}

Summing over $n\geq j$ gives
\begin{equation}\label{leafygreens}
	\E_{\alpha,r}[U^{(r)}_j] = e^{-\alpha r}\alpha^j \sum_{n\geq j} (\alpha(r-1))^{n-j}/(n-j)! = e^{- \alpha }\alpha^j.
\end{equation}

\begin{proof}[of Theorem \ref{leaves}]

 From \eqref{leafygreens}, $\E_{\alpha,r}[U^{(r)}_j] = e^{-\alpha}\alpha^j$ for $j< r$ and $\E_{\alpha,r}[U^{(r)}] = \sum_{j=0}^{r-1} e^{-\alpha} \alpha^j$.  Then $\lim_{r\to\infty }U^{(r)} = U$ and by Monotone Convergence Theorem
\begin{equation}
	\E_\alpha[U]=\lim_{r\to \infty} \E_{\alpha,r}[U^{(r)}] = \lim_{r\to\infty} e^{-\alpha}\frac{1-\alpha^r}{1-\alpha} = \frac{e^{-\alpha}}{1-\alpha}.
\end{equation}\end{proof}

\section{Expectation of $D^2$}

For $a,b,c,m \geq 0 $ let $n = a+b+c+m$.  Let $\intersection(a,b,c,m)$ be the set of all ordered pairs of subsets of $[n]$, $(A,B)$, such that $|A \setminus B| = a $, $|B \setminus A| = b$, and $|A \cap B| = c$ and let $\intersection(a,b,c) = \bigcup_m \intersection(a,b,c,m)$.  We denote the set of distinct subwords $u$ on the indices $A\cup B$ such that and both $u_A$ and $u_B$ are complete by $\chi_r(A,B)$.  The size of $\chi_r(A,B)$ is denoted by $x_r(A,B)$ and only depends on the relative order of $A$ and $B$.  Suppose $(A,B) \in \intersection(a,b,c)$.  For both subwords to be complete, each index $a_i \in A \setminus B$ must have letters strictly less than $\min (a+c-i, r)$, each index $b_j \in B \setminus A$ must have letters strictly less than $\min (b+c-j, r)$, and each index $a_i = b_j \in A \cap B$ must have letters strictly less than $ \min( a+c-i , b+c - j, r)$.    Thus
 
 \begin{equation}\label{chitown}
 x_r({A,B}) =  \frac{f_r(a+c) f_r(b+c) }{ \prod_{a_i = b_j} \min ( r,  \max (a+c - i, b+c - j))}. 
 \end{equation}
 
 The following lemma provides uniform bounds of $x_r(A,B)$ for all $(A,B)\in \intersection(a,b,c)$.

\begin{lemma}\label{lemchi}
Fix $a,b,c$ and $r \geq 0$.
For $(A,B) \in \intersection(a,b,c)$, if $a\leq b$, then 
$$f_r(a+c) f_r(b) \leq x_r (A,B) \leq (a+c)! (b+c)! / c!$$
Otherwise if $a>b$, then $$f_r(b+c)f_r(a) \leq x_r(A,B)\leq (a+c)! (b+c)! / c!.$$

\end{lemma}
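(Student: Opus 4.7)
The plan is to start from the explicit formula \eqref{chitown} and obtain both inequalities by bounding the product in its denominator. I would first introduce coordinates: let $i_1 < \cdots < i_c$ denote the positions within $A$'s enumeration of the common indices $A \cap B$, and $j_1 < \cdots < j_c$ their positions within $B$'s enumeration. The combinatorial backbone of the argument is the interlacing bound
\[
k \leq i_k \leq a+k, \qquad k \leq j_k \leq b+k, \qquad 1 \leq k \leq c,
\]
which just records that at most $a$ (respectively $b$) elements of $A \setminus B$ (respectively $B \setminus A$) can precede the $k$-th common index.

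For the upper bound $x_r(A,B) \leq (a+c)!(b+c)!/c!$, I would first apply the trivial estimate $f_r(n) \leq n!$ in the numerator. Then, using $i_k \leq a+k$ to conclude $a+c - i_k \geq c-k$, I would bound $\max(a+c-i_k,b+c-j_k) \geq c-k$. Once the $\min(r,\cdot)$ truncation is handled, multiplying these factor-by-factor inequalities over $k=1,\ldots,c$ yields a lower bound of $c!$ on the denominator, and dividing gives the claimed upper bound.

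For the lower bound in the case $a \leq b$, I would exploit the other ends of the interlacing: $i_k \geq k$ combined with $a \leq b$ gives $a+c-i_k \leq a+c-k \leq b+c-k$, and $j_k \geq k$ gives $b+c-j_k \leq b+c-k$, so $\max(a+c-i_k, b+c-j_k) \leq b+c-k$. Taking $\min(r,\cdot)$ and multiplying over $k=1,\ldots,c$, I would reindex the resulting product and recognize it as the telescoping ratio $f_r(b+c)/f_r(b)$. Dividing the numerator $f_r(a+c)\,f_r(b+c)$ by this upper bound on the denominator then leaves precisely $f_r(a+c)\,f_r(b)$. The case $a > b$ follows by swapping the roles of $A$ and $B$ (and therefore of $a+c$ and $b+c$).

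The main obstacle will be the bookkeeping around the truncated factorial: verifying that the bounding product really does collapse to $f_r(b+c)/f_r(b)$ requires a case check across the three regimes $b+c \leq r$, $b \leq r < b+c$, and $r < b$. This is routine, coming directly from the definition of $f_r$, but it is the one step where the piecewise nature of $f_r$ forces some care. Once the interlacing bounds on $(i_k,j_k)$ are combined with the telescoping identity, the rest of the argument is a direct algebraic manipulation.
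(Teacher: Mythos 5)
Your lower-bound argument is sound and, if anything, a little more careful than the paper's: the paper simply identifies the extremal configuration (all of $A\cap B$ preceding the rest) that maximizes the denominator of \eqref{chitown} and evaluates it there, whereas your interlacing bounds $i_k\ge k$, $j_k\ge k$ prove the factor-by-factor inequality for \emph{every} configuration and then telescope the product to $f_r(b+c)/f_r(b)$ via $f_r(x)/f_r(x-1)=\min(r,x)$. Both routes give the same answer, and your version makes explicit a monotonicity step the paper leaves implicit.

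The upper bound, however, has a genuine gap. You bound the numerator by $(a+c)!\,(b+c)!$ and then want the truncated denominator to be at least $c!$. But every denominator factor is capped at $r$, so the best your interlacing can give is $\prod_{k=1}^{c}\min(r,c-k+1)=f_r(c)$, not $c!$; when $r<c$ one has $f_r(c)=r!\,r^{c-r}<c!$, and the denominator genuinely can be smaller than $c!$ (take $r=1$, $a=b=0$, $c=2$: the denominator is $1<2!$). So ``handling the $\min(r,\cdot)$ truncation'' is not routine bookkeeping --- it is exactly where the argument fails. Two repairs are available: (i) keep the truncation throughout to obtain $x_r(A,B)\le f_r(a+c)f_r(b+c)/f_r(c)$ (the bound the paper derives, attained when $A\cap B$ comes last), and then use that $f_r(x)/x!$ is non-increasing in $x$, so that $\tfrac{f_r(a+c)}{(a+c)!}\cdot\tfrac{f_r(b+c)}{(b+c)!}\le\tfrac{f_r(c)}{c!}$; or (ii) discard the truncation from the per-index choice counts first --- each count only increases --- giving $x_r(A,B)\le (a+c)!\,(b+c)!/\prod_k\max(\cdot,\cdot)$, and only then apply your $\ge c!$ bound to the untruncated product. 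A minor related point: as written, your bound $\max(a+c-i_k,b+c-j_k)\ge c-k$ produces a factor of $0$ at $k=c$; the per-index counts are $\min(a+c-i+1,\,b+c-j+1,\,r)$, so the denominator factors should read $\min(r,\max(a+c-i+1,b+c-j+1))\ge\min(r,c-k+1)$ --- an off-by-one already present in \eqref{chitown} itself.
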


\begin{proof}

For a fixed $a,b,c$ and $r$, $ x_r({A,B}) $  will reach its minimum value over $\intersection(a,b,c)$ when the product in the denominator is maximized in the right hand side of \eqref{chitown}. 
The denominator of $x_r(A,B)$ is maximized when every index in $A\cap B$ is less than every index in $A\cup B \setminus A\cap B$ so $A\cap B = \{ a_1 =  b_1, \cdots , a_c = b_c\}$. In this case for $a\leq b$ the denominator of the right hand side of \eqref{chitown} is given by 
$$\prod_{i=1}^c \min(r, b + i) = f_r(b+c)/f_r(b)
$$
and $$x_r({A,B}) = f_r(a+c) f_r(b).$$ Otherwise for $a>b$ $$x_r(A,B) = f_r(b+c)f_r(a).$$  

For the other direction $x_r(A,B)$ is maximized when the denominator in the right-hand side of \eqref{chitown} is minimized.  This occurs when every index in $A\cap B$ is greater than every index in $A\cup B \setminus A\cap B$.  In this case, \begin{equation} x_r({A,B}) =  \frac{ f_r(a+c) f_r(b+c) }{ f_r(c)} \leq  \frac{(a+c)! (b+c)! }{ c!}. \qedhere\end{equation}
\end{proof}

These bounds on $x_r(A,B)$ will give us bounds on $\E_\alpha[D^2].$  Let $V_r = 1+\sum_{j=1}^\infty D^{(r)}_j.$  For a fixed set of indices $A\in \mathbb{Z}_+$ let $\indc_{A}(W)$ denote the indicator function that is $1$ if $W_A$ is complete and $0$ if $W_A$ is not complete or $A$ is not a subset of indices of $W$.  Then
\begin{equation*}
V_r = \sum_{A \subset \mathbb{Z}_+} \indc_{A}(W)
\end{equation*}
with $\lim_{r\to\infty} V_r = D$.  We also have
\begin{align*}
V_r^2 &= \sum_{(A,B) \subset \mathbb{Z}_+^2 } \indc_{A}(W)\indc_{B}(W) \\
&= \sum_{a,b,c}\sum_{\intersection(a,b,c)} \indc_A(W)\indc_B(W)\\
& = \sum_{a,b,c,m} \sum_{\intersection(a,b,c,m)} \indc_{A}(W)\indc_{B}(W)\indc_{|W|=a+b+c+m}.
\end{align*}
For a fixed pair $(A,B) \in \intersection(a,b,c,m)$, using Lemma \ref{specific} we have   

\begin{align} 
\E_{\alpha,r}[\indc_{A}(W)&\indc_{B}(W)\indc_{\{|W|=a+b+c+m\}}] \nonumber\\ 
&= \sum_{u\in\chi_r(A,B)}\P_{\alpha,r}\left( \{W_{A\cup B} = u\} \cap \{|W| = a+b+c+m\} \right) \nonumber \\
&= \frac{1}{(a+b+c+m)!}e^{-\alpha r }\alpha^{a+b+c+m}r^{m}x_r(A,B).\label{carlosdanger}
\end{align}

The value of $x_r(A,B)$ depends on $(A,B)$ but the upper and lower bounds from Lemma \ref{lemchi} only depend on $a,b,$ and $c$.  Thus we have bounds of \eqref{carlosdanger} that are uniform for all $(A,B) \in \intersection(a,b,c,m)$.  For each $m$ the size of $\intersection(a,b,c,m)$ is ${ a+b+c+m \choose a, b, c, m } = \frac{(a+b+c+m)!}{a!b!c!m!}$.  Thus
 \begin{multline}\label{annoyed}
	\sum_{\intersection(a,b,c,m)} \E_{\alpha,r}\left[\indc_{A}(W)\indc_{B}(W)\indc_{|W|=a+b+c+m}\right]\\ 
	\geq \frac{\alpha^{a+b+c}}{a!b!c!} f_r(\max(a,b) +c) f_r( \min(a,b)) \frac{1}{m!}(\alpha r)^{m}e^{-\alpha r} 
\end{multline}
Summing over $m\geq 0$ in \eqref{annoyed} gives the lower bound
\begin{equation}
\sum_{\intersection(a,b,c)} \E_{\alpha,r}\left[\indc_{A}(W)\indc_{B}(W)\right] \geq \frac{\alpha^{a+b+c}}{a!b!c!} f_r(\min(a,b) +c) f_r( \max(a,b)).
\end{equation}
Similarly for the upper bound we have 
\begin{equation}
\sum_{\intersection(a,b,c)} \E_{\alpha,r}\left[\indc_{A}(W)\indc_{B}(W)\right]\leq \alpha^{a+b+c}{a+c \choose c}{b+c \choose c}.
\end{equation}

\begin{proof}[of Theorem \ref{var}]
	
In this section we make repeated use of the identity $$\sum_{n\geq 0} {n+k\choose n} x^n = \frac{1}{(1-x)^{k+1}}.$$  See \cite{gfunc} for a variety of similar identities.  
	
	By Fatou's Lemma $\lim_{r\to \infty} \E_{\alpha,r}[V_r^2] \leq \E_\alpha[\lim_{r\to\infty} V_r^2] = \E_\alpha[D^2]$ so

\begin{align}
\lim_{r\to\infty}\sum_{a < b,c}\frac{\alpha^{a+b+c}}{a!b!c!} f_r(\min(a,b) +c) f_r( \max(a,b)) &\leq \sum_{0\leq a < b,0\leq c }{a+c \choose a}\alpha^{a+b+c} \label{vellhart} \\ 
&\leq \E_\alpha[\lim_{r\to\infty}V_r^2]  \nonumber\\
&= \E_\alpha[D^2]. \nonumber
\end{align}

The right hand side of \eqref{vellhart} can be simplified further.  Suppose $1/2< \alpha < 1.$  Then 
\begin{align}
	\sum_{0\leq a < b,0\leq c }{a+c \choose c}\alpha^{a+b+c} &= \sum_{0\leq a < b} \frac{\alpha^{b}}{1-\alpha}\left(\frac{\alpha}{(1-\alpha)}\right)^a \label{jabba1}\\
	&=\frac{1}{2\alpha-1}\sum_{b> 0} \alpha^b\left(\left(\frac{\alpha}{1-\alpha}\right)^{b} - 1\right)\label{jabba2}\\
	&=\frac{1}{2\alpha-1}\sum_{b>0} \left(\frac{\alpha^{2}}{1-\alpha}\right)^b - \alpha^b. \label{jabba3}
\end{align}

There is an issue when $\alpha = 1/2$ in \eqref{jabba2} and \eqref{jabba3}.  But in this case $\frac{ \alpha}{ 1-\alpha} = 1$ in \eqref{jabba1}, so \eqref{jabba2} becomes $\sum_{b\geq 0} \frac{b \alpha^b}{1-\alpha},$ which is finite.  
Otherwise \eqref{jabba3} diverges precisely when $\alpha^2/(1-\alpha) \geq 1$ which occurs if $({\sqrt{5} - 1})/{2} \leq \alpha <1 .$
For the other direction we have 

\begin{align}
\E_\alpha[D^2] & = \E_\alpha[\lim_{r\to \infty}V_r^2] \nonumber\\
& \leq \sum_{a,b,c\geq 0} {a+c \choose c}{b+c \choose c} \alpha^{a+b+c}\nonumber\\
 & = \sum_{b,c \geq 0} {b+c \choose c} \frac{\alpha^{b+c}}{(1-\alpha)^{c+1}}\nonumber\\
 & =\frac{1}{(1-\alpha)^2}\sum_{c\geq 0} \left(\frac{\alpha}{(1-\alpha)^{2}}\right)^c	\label{lastly}
\end{align}
  The last line \eqref{lastly} converges when ${\alpha}/{(1-\alpha)^2} < 1,$ which occurs when $0 < \alpha < (3-\sqrt{5})/2.$
\end{proof}

As $\alpha$ increases from $(3-\sqrt{5})/2$ to $({\sqrt{5}-1})/{2}$ a phase transition occurs where $\E_\alpha[D^2]$ becomes infinite.  With a more precise analysis of the size of $x_r(A,B)$ that depends more closely on the relative order of $A$ and $B$, one might be able to obtain the exact location where this phase transition occurs.

\section*{Acknowledgements}

We wish to express thanks to Tobias Johnson and Anne Schilling for useful discussions.


\bibliographystyle{abbrvnat}
\bibliography{shuffle_forest}

\begin{thebibliography}{10}
\providecommand{\natexlab}[1]{#1}
\providecommand{\url}[1]{\texttt{#1}}
\expandafter\ifx\csname urlstyle\endcsname\relax
  \providecommand{\doi}[1]{doi: #1}\else
  \providecommand{\doi}{doi: \begingroup \urlstyle{rm}\Url}\fi

\bibitem[Aldous and Steele(2004)]{AS}
D.~Aldous and J.~M. Steele.
\newblock The objective method: probabilistic combinatorial optimization and
  local weak convergence.
\newblock In \emph{Probability on discrete structures}, volume 110 of
  \emph{Encyclopaedia Math. Sci.}, pages 1--72. Springer, Berlin, 2004.
\newblock \doi{10.1007/978-3-662-09444-0_1}.
\newblock URL \url{http://dx.doi.org/10.1007/978-3-662-09444-0_1}.

\bibitem[Benjamini and Schramm(2001)]{BS}
I.~Benjamini and O.~Schramm.
\newblock Recurrence of distributional limits of finite planar graphs.
\newblock \emph{Electron. J. Probab.}, 6:\penalty0 no. 23, 13 pp. (electronic),
  2001.
\newblock ISSN 1083-6489.
\newblock \doi{10.1214/EJP.v6-96}.
\newblock URL \url{http://dx.doi.org/10.1214/EJP.v6-96}.

\bibitem[Billingsley(1999)]{Billingsley}
P.~Billingsley.
\newblock \emph{Convergence of probability measures}.
\newblock Wiley Series in Probability and Statistics: Probability and
  Statistics. John Wiley \& Sons, Inc., New York, second edition, 1999.
\newblock ISBN 0-471-19745-9.
\newblock \doi{10.1002/9780470316962}.
\newblock URL \url{http://dx.doi.org/10.1002/9780470316962}.
\newblock A Wiley-Interscience Publication.

\bibitem[Johnson et~al.(2017)Johnson, Schilling, and Slivken]{JSS}
T.~Johnson, A.~Schilling, and E.~Slivken.
\newblock Local limit of the fixed point forest.
\newblock \emph{Electron. J. Probab.}, 22:\penalty0 Paper No. 18, 26, 2017.
\newblock ISSN 1083-6489.
\newblock \doi{10.1214/17-EJP36}.
\newblock URL \url{https://doi.org/10.1214/17-EJP36}.

\bibitem[Kingman(1993)]{Kingman}
J.~F.~C. Kingman.
\newblock \emph{Poisson processes}, volume~3 of \emph{Oxford Studies in
  Probability}.
\newblock The Clarendon Press, Oxford University Press, New York, 1993.
\newblock ISBN 0-19-853693-3.
\newblock Oxford Science Publications.

\bibitem[Knuth(1998)]{knuth}
D.~E. Knuth.
\newblock \emph{The Art of Computer Programming, Volume 3: (2Nd Ed.) Sorting
  and Searching}.
\newblock Addison Wesley Longman Publishing Co., Inc., Redwood City, CA, USA,
  1998.
\newblock ISBN 0-201-89685-0.

\bibitem[McKinley(2015)]{McKinley}
G.~McKinley.
\newblock A problem in card shuffling, {UC} {D}avis {U}ndergraduate {T}hesis,
  2015.
\newblock
  \url{https://www.math.ucdavis.edu/files/1114/3950/6599/McKinley_UG_Thesis_SP15.pdf}.

\bibitem[Neveu(1986)]{AIHPB_1986__22_2_199_0}
J.~Neveu.
\newblock Arbres et processus de {G}alton-{W}atson.
\newblock \emph{Ann. Inst. H. Poincar\'{e} Probab. Statist.}, 22\penalty0
  (2):\penalty0 199--207, 1986.
\newblock ISSN 0246-0203.
\newblock URL \url{http://www.numdam.org/item?id=AIHPB_1986__22_2_199_0}.

\bibitem[Watson and Galton(1875)]{GaltonWatson}
H.~W. Watson and F.~Galton.
\newblock On the probability of the extinction of families.
\newblock \emph{J. Anthr. Inst. Great Britain}, 4:\penalty0 138--144, 1875.

\bibitem[Wilf(2006)]{gfunc}
H.~S. Wilf.
\newblock \emph{generatingfunctionology}.
\newblock A K Peters, Ltd., Wellesley, MA, third edition, 2006.
\newblock ISBN 978-1-56881-279-3; 1-56881-279-5.

\end{thebibliography}

\end{document}